\newtheorem{thm}{Theorem}[section]
\newtheorem{lem}[thm]{Lemma}
\theoremstyle{definition}
\newtheorem{rem}[thm]{Remark}
\numberwithin{equation}{section}
\numberwithin{equation}{section}
\def\square{\vbox{
      \hrule height 0.4pt
      \hbox{\vrule width 0.4pt height 5.5pt \kern 5.5pt \vrule width 0.4pt}
      \hrule height 0.4pt}}
\def\ch\mathrm{c h}
\begin{document}
\baselineskip 17.5pt

\newcommand{\auths}[1]{\textrm{#1},}
\newcommand{\artTitle}[1]{\textsl{#1},}
\newcommand{\jTitle}[1]{\textrm{#1}}
\newcommand{\Vol}[1]{\textbf{#1}}
\newcommand{\Year}[1]{\textrm{(#1)}}
\newcommand{\Pages}[1]{\textrm{#1}}

\title[surface sum of two handlebodies]{A characteristics for a surface sum  of two handlebodies along an annulus or a once-punctured torus to be a handlebody}

\author{Fengchun Lei$^*$, He Liu, Fengling Li$^\dag$}
\address{School of Mathematical Sciences, Dalian University of Technology, Dalian 116024, China}
\email{fclei@dlut.edu.cn, 1033410708@qq.com, fenglingli@dlut.edu.cn}

\author{Andrei Vesnin$\ddag$}
\address{Regional Mathematical Center, Department of Mechanics and Mathematics, Tomsk State University, Tomsk  634050, Russia}
\email{vesnin@math.nsc.ru}

\thanks{$*$Supported in part by a grant (No.11431009) of NSFC}

\thanks{$\dag$ Supported in part by grants (No.11671064) of NSFC and a grant of the Fundamental Research Funds for the Central Universities (No. DUT19LK15)}

\thanks{$\ddag$ Supported in part by the grant from Ministry of Science and Education of Russia (state agreement No.1.13557.2019/13.1).}

\subjclass[2010]{57N10}
\keywords{handlebody, surface sum of 3-manifolds, free group}

\begin{abstract}
The main results of the paper is that we give a characteristics for an annulus sum and a once-punctured torus sum of two handlebodies to be a handlebody as follows: 1. The annulus sum $H=H_1\cup_A H_2$ of two handlebodies $H_1$ and $H_2$ is a handlebody if and only if the core  curve of $A$ is a longitude for either $H_1$ or $H_2$. 2. Let $H=H_1\cup_T H_2$ be a surface sum of two handlebodies $H_1$ and $H_2$ along a once-punctured torus $T$. Suppose that $T$ is incompressible in both $H_1$ and $H_2$.  Then $H$ is a handlebody if and only if the there exists a collection $\{\delta, \sigma\}$ of simple closed curves on $T$ such that either $\{\delta, \sigma\}$ is primitive in $H_1$ or $H_2$, or $\{\delta\}$ is primitive in $H_1$  and $\{\sigma\}$ is primitive in $H_2$.
\end{abstract}
\maketitle

\section{Introduction}
Let $M_1$ and $M_2$ be two compact connected orientable 3-manifolds, $F_i\subset \partial M_i$ a compact connected surface, $i=1,2$, and $h:F_1\rightarrow F_2$ a homeomorphism. We call the 3-manifold $M=M_1\cup_h M_2$, obtained by gluing $M_1$ and $M_2$ together via $h$, a {\em surface sum} of $M_1$ and $M_2$. Denote $F=F_1=F_2$ in $M$. We also call $M$ a surface sum  of $M_1$ and $M_2$ along $F$, and denote it by $M=M_1\cup_F M_2$. When $F_i$ is a boundary component of $M_i$, $i=1,2$, $M$ is called an amalgamated 3-manifold of $M_1$ and $M_2$ along $F=F_1=F_2$.

Heegaard distances and related topics of amalgamation of two Heegaard splittings have been studied extensively in recent years, see, for example, ~\cite{Bach, KQ, La, LiT, Schu-Weid, YL}, etc.

In ~\cite{Lei1}, some properties of an annulus sum of 3-manifolds was obtained. In particular, a sufficient condition for an annulus sum of two handlebodies was given. In ~\cite{LiT}, some facts on Heegaard splittings of an annulus sum of 3-manifolds have been given, which played an essential role in calculating the Heegaard genus of the corresponding 3-manifold. Hyperbolic geometric structures related to quasi-Fuchsian realizations of once-punctured torus group were studied in~\cite{MPV}.

The main results of the paper is that we give a characteristics for an annulus sum and a once-punctured torus sum of two handlebodies to be a handlebody as follows.

\begin{thm}\label{thm1}
Let $H=H_1\cup_A H_2$ be a surface sum of two handlebodies $H_1$ and $H_2$ along an annulus $A$. Then $H$ is a handlebody if and only if the core curve of $A$ is a longitude for either $H_1$ or $H_2$.

\end{thm}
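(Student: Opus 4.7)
The plan is to prove the two directions separately, using an explicit cut-and-paste construction for $(\Leftarrow)$ and a classical combinatorial-group-theoretic input for $(\Rightarrow)$.

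For the sufficiency direction, assume without loss of generality that $c$ is a longitude of $H_1$, so there is a properly embedded disk $D\subset H_1$ with $\partial D$ meeting $c$ transversely in one point. After a small isotopy $\partial D\cap A$ is a single essential arc $\alpha$ spanning $A$, and we split $\partial D=\alpha\cup\beta$ with $\beta\subset\partial H_1\setminus A$. Let $N(D)\subset H_1$ be a closed regular neighborhood of $D$, a $3$-ball whose boundary sphere decomposes into two parallel disks $D_{\pm}$ and two rectangles $\alpha\times I$ (lying on $A$) and $\beta\times I$ (lying on $\partial H_1\setminus A\subset\partial H$). Cutting $H$ along $N(D)$ turns $H_1$ into the handlebody $H_1':=H_1\setminus\operatorname{int}(N(D))$ of genus $g_1-1$ (since $D$ is non-separating), and cuts $A$ along the strip $\alpha\times I$, leaving the disk $\widetilde A:=A\setminus\operatorname{int}(\alpha\times I)$. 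Therefore
\[
H\setminus\operatorname{int}(N(D)) \;=\; H_1'\cup_{\widetilde A}H_2 \;=\; H_1'\,\natural\,H_2,
\]
a handlebody of genus $g_1+g_2-1$. To recover $H$ we reattach $N(D)$ along the portion of $\partial N(D)$ sitting in $\partial\bigl(H\setminus\operatorname{int}(N(D))\bigr)$, namely $E:=D_+\cup(\alpha\times I)\cup D_-$; a direct check (it is the complement of the rectangle $\beta\times I$ in the sphere $\partial N(D)$) shows that $E$ is a single disk. Gluing a $3$-ball to a $3$-manifold along a disk in the boundary is a boundary-connected sum with $B^3$, so $H$ is a handlebody of genus $g_1+g_2-1$.

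For the necessity direction, assume $H$ is a handlebody, so $\pi_1(H)$ is free. Van Kampen's theorem applied to $H=H_1\cup_A H_2$ yields
\[
\pi_1(H) \;\cong\; \pi_1(H_1)*_{\pi_1(A)}\pi_1(H_2) \;=\; F_{g_1}*_{\mathbb{Z}}F_{g_2},
\]
with $\mathbb{Z}=\langle[c]\rangle$ included in each factor by the class of the core curve. The argument then invokes the classical theorem (due to Shenitzer, with related contributions by Swarup and others) characterising when an amalgamated free product of two finitely generated free groups over an infinite cyclic subgroup is itself free: this happens if and only if the amalgamating element is primitive in at least one of the factors. Consequently $[c]$ is primitive in $\pi_1(H_1)=F_{g_1}$ or in $\pi_1(H_2)=F_{g_2}$, i.e.\ $c$ is a longitude of $H_1$ or $H_2$.

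The main obstacle I anticipate is the careful local analysis in the sufficiency direction: verifying that the reattaching region $E$ on $\partial N(D)$ really is a single disk (rather than, say, a pair of disks or an annulus) requires the explicit dissection of $\partial N(D)$ into its four pieces together with the observation that an essential spanning arc cuts the annulus $A$ into a disk. A secondary concern is the degenerate situation in the necessity direction in which $[c]$ is trivial in some $\pi_1(H_i)$ (the annulus is compressible on that side); there the van Kampen pushout is not literally an amalgamation over $\mathbb{Z}$, and one must treat it separately, using, for example, Magnus's theorem on one-relator quotients of free groups or else a direct topological argument (producing a reducing sphere, or showing $\partial H$ is disconnected) to rule out $H$'s being a handlebody unless the primitivity condition still holds on the other side.
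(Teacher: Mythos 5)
Your sufficiency argument is correct, and it is essentially a repackaging of the paper's: you cut out a product neighborhood $N(D)=D\times I$ of the dual disk, note that $H\setminus \operatorname{int}N(D)$ is the boundary connected sum of a genus $g_1-1$ handlebody with $H_2$ (the gluing region $\widetilde A$ is a disk precisely because $\alpha$ spans $A$), and reglue the ball along the single disk $E$; the paper instead splits the handlebody containing the dual disk along a separating disk into a solid torus carrying $A$ and a genus-one-less piece, so that $A$ is absorbed into the solid torus. Both give $H$ a handlebody of genus $g_1+g_2-1$.

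Your necessity direction, however, takes a genuinely different route from the paper and has two gaps. The paper stays topological: it takes a complete disk system for $H$ minimizing intersection with $A$, eliminates circle components and arcs with both endpoints on one boundary component of $A$, and the outermost spanning arc then yields an embedded disk $\Lambda$ in $H_1$ or $H_2$ with $|\partial\Lambda\cap c|=1$, i.e.\ a longitude directly; the case where $c$ bounds a disk on one side is handled separately via the Przytycki--Jaco handle addition theorem (Lemma~\ref{lem4}, Theorem~\ref{thm0}). In your algebraic route, first, the degenerate case $[c]=1$ in some $\pi_1(H_i)$ is only flagged, not carried out: there you must still conclude that $c$ is a longitude of the \emph{other} handlebody, which needs both the group-theoretic step (that $F_{g_2}/\langle\langle c\rangle\rangle$ free forces $c$ primitive or trivial) and an argument excluding the doubly compressible case (e.g.\ Mayer--Vietoris gives $H_2(H)\neq 0$ there, or one exhibits an essential sphere), and then still the passage from primitive to longitude. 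Second, and more seriously, your concluding ``i.e.'' silently identifies algebraic primitivity of $[c]$ in $\pi_1(H_i)$ with being a longitude, i.e.\ with the existence of an \emph{embedded} essential disk meeting $c$ geometrically once. That implication is true but is a nontrivial theorem (Zieschang; Gordon's result on primitive sets of loops in the boundary of a handlebody), not a rephrasing; the paper's preliminaries do assert this equivalence casually, but its proof of Theorem~\ref{thm1} never uses the hard direction, whereas your argument hinges on it. With Shenitzer/Swarup plus an explicit citation for primitive-implies-longitude, and the compressible case completed (most easily via the handle addition theorem as in the paper), your proof closes; as written, those two steps are missing.
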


\begin{thm}\label{thm2}
Let $H=H_1\cup_T H_2$ be a surface sum of two handlebodies $H_1$ and $H_2$ along a once-punctured torus $T$. Suppose that $T$ is incompressible in both $H_1$ and $H_2$.  Then $H$ is a handlebody if and only if there exists a collection $\{\delta, \sigma\}$ of simple closed curves on $T$ such that \begin{itemize}
\item either $\{\delta, \sigma\}$ is primitive in $H_1$ or $H_2$,
\item or $\{\delta\}$ is primitive in $H_1$, and $\{\sigma\}$ is primitive in $H_2$.
\end{itemize}
\end{thm}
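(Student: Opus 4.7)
The plan is to invoke the classical characterization that a compact, connected, orientable 3-manifold with non-empty boundary is a handlebody if and only if it is irreducible and has free fundamental group, and to verify both properties in each direction of the theorem.

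\textbf{Sufficiency.} For irreducibility of $H$: each $H_i$ is an irreducible handlebody and $T$ is incompressible in both, so a standard innermost-disk argument applied to a sphere $S\subset H$ meeting $T$ transversely and minimally shows $H$ is irreducible. For freeness of $\pi_1(H)$, Seifert--van Kampen yields
\[
\pi_1(H) \;=\; \pi_1(H_1) *_{\pi_1(T)} \pi_1(H_2),
\]
where $\pi_1(T)=F(\delta,\sigma)$. In Case (a), $\{\delta,\sigma\}$ primitive in $H_1$ (say) means $\pi_1(T)$ is a free factor of $\pi_1(H_1)$, so one may write $\pi_1(H_1)=\pi_1(T)*K$ with $K$ free; the amalgamated product then simplifies to $K*\pi_1(H_2)$, a free product of two free groups. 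In Case (b), primitivity of $\delta$ in $H_1$ produces a non-separating meridian disk $D_1\subset H_1$ dual to $\delta$, and primitivity of $\sigma$ in $H_2$ produces a disk $D_2\subset H_2$ dual to $\sigma$; these can be arranged disjoint in $H$. Combined with the basis structure of $\{\delta,\sigma\}$ on the once-punctured torus, the two amalgamation relations permit a Tietze reduction eliminating both $\delta$ and $\sigma$, leaving a free group on the remaining generators.

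\textbf{Necessity.} Assume $H$ is a handlebody, and choose a complete system of meridian disks $\mathcal{D}$ for $H$. Isotope $\mathcal{D}$ to meet $T$ transversely with $|\mathcal{D}\cap T|$ minimal. An innermost-disk argument using incompressibility of $T$ in each $H_i$ eliminates every closed component of $\mathcal{D}\cap T$, so $\mathcal{D}\cap T$ is a family of arcs on $T$. Cutting the disks of $\mathcal{D}$ along these arcs decomposes $\mathcal{D}$ into sub-disks lying in $H_1$ or $H_2$. Using the classification of essential simple arcs on the once-punctured torus (each such arc cuts $T$ into an annulus) and the fact that $\mathcal{D}$ cuts $H$ into a ball, one reads off two simple closed curves $\delta,\sigma$ on $T$ realizing one of the two alternatives: if the sub-disks detecting primitivity all lie on one side of $T$, Case (a) applies; otherwise they distribute across both sides, yielding Case (b).

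The main obstacle I anticipate lies in the necessity direction: the combinatorial analysis of the arc pattern $\mathcal{D}\cap T$, together with the distribution of the induced sub-disks between $H_1$ and $H_2$, must be carried out carefully to produce primitive curves on the correct side(s) and to show that the dichotomy (a)/(b) is complete, ruling out hybrid patterns. A secondary difficulty appears in Case (b) of sufficiency, where the Tietze reduction of the amalgamated presentation needs the geometric input from $D_1$ and $D_2$ (and the incompressibility of $T$) to justify the simultaneous elimination of $\delta$ and $\sigma$.
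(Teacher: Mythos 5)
Your sufficiency argument follows the paper's own route (Seifert--van Kampen plus irreducibility plus Jaco's criterion that a compact irreducible $3$-manifold with nonempty boundary and free fundamental group is a handlebody); case (a) is fine, but in case (b) the step you yourself flag is not closed: after eliminating $i_*(\delta)$ by the first amalgamation relation, the second relation becomes $i_*(\sigma)\bigl(j_*(\delta),\dots\bigr)=j_*(\sigma)$, in which $j_*(\sigma)$ can reappear many times on the left, so ``Tietze reduction eliminating both $\delta$ and $\sigma$'' is an assertion rather than an argument. (The paper's Lemma \ref{lem5} is no more detailed at this point, so here you are at parity with the source, but the step is still unproved in your write-up.)

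The genuine gap is in the necessity direction. After the standard reductions (minimal intersection, no circle components, arcs only), you propose to ``read off'' the curves $\delta,\sigma$ and the side(s) of primitivity from how the sub-disks of $\mathcal D$ cut along $T$ distribute between $H_1$ and $H_2$; this is exactly what you call the main obstacle, and no mechanism is supplied --- a sub-disk of a meridian disk is not in general dual to any simple closed curve on $T$, and the dichotomy (a)/(b) cannot be extracted from the arc pattern alone. The paper's proof hinges on an idea absent from your plan: first show (by a disk-swap using completeness of $\mathcal D$) that an outermost arc is essential in $T$, so the corresponding outermost sub-disk $\Delta$, lying say in $H_1$, is a $\partial$-compressing disk for $T$; performing the $\partial$-compression converts $H=H_1\cup_T H_2$ into an annulus sum $H=H_1'\cup_A H_2'$ with $g(H_1')=g(H_1)-1$, and then Theorem \ref{thm1} (whose proof ultimately rests on the $2$-handle addition theorem of Przytycki/Jaco via Theorem \ref{thm0} and Lemma \ref{lem4}) forces the core $\sigma$ of $A$ to be a longitude of $H_1'$ or of $H_2'$. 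Taking $\delta$ dual to $\partial\Delta$ and applying Lemma \ref{lem1} (disjoint dual disks give a primitive collection) then yields either $\{\delta,\sigma\}$ primitive in $H_1$ or $\delta$ primitive in $H_1$ and $\sigma$ primitive in $H_2$. Without this reduction to the annulus case (or an equivalent input such as the handle addition theorem), your combinatorial analysis of $\mathcal D\cap T$ does not produce the required longitudes, so the necessity direction is not established.
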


The paper is organized as follows. In section 2, we briefly introduce some definitions and preliminaries, and show some lemmas which will be used in the proofs of Theorem ~\ref{thm1} and Theorem ~\ref{thm2}. In section 3, we complete the proofs of Theorem ~\ref{thm1} and Theorem ~\ref{thm2}.

\section{Preliminaries}
The concepts and terminologies not defined in the paper are
standard, referring to, for example, ~\cite{He}, ~\cite{Jaco1}, or~\cite{S}.

An embedded 2-sphere $S$ in a 3-manifold $M$ is {\em trivial} if $S$ bounds a 3-ball in $M$. $S$ is {\em essential} if it is not trivial in $M$. If $M$ contains an essential 2-sphere, it is {\em reducible}. Otherwise, it is irreducible.

A properly embedded disk $D$ in a 3-manifold $M$ is {\em trivial} if $\partial D$ bounds a disk $D'$ in $\partial M$, and $D\cup D'$ bounds a 3-ball in $M$. $D$ is {\em essential} if it is not trivial in $H$. A properly embedded simple arc $\alpha$ in a surface $F$ is {\em trivial} if $\alpha$ cuts out of a disk from $F$. $\alpha$ is {\em essential} if it is not trivial in $F$.

Let $M$ be a compact 3-manifold. Suppose $F$ is a sub-surface of $\partial M$ or a surface properly embedded in $M$. If either
\begin{itemize}
\item[(1)] $F$ is a trivial disk, or
\item[(2)] $F$ is a trivial 2-sphere, or
\item[(3)] there is a disk $D\subset{M}$ such that $D\cap{F}=\partial{D}$ and $\partial{D}$ is an essential loop in~$F$,
\end{itemize}
then we say that $F$ is {\em compressible
}in $M$. Such a disk $D$ is called a {\em compressing disk}. We say
that $F$ is {\em incompressible} in $M$ if $F$ is not compressible
in $M$. If $\partial{M}$ is incompressible, then $M$ is said to be
{\em $\partial$-irreducible}. If $F$ is an incompressible surface in
$M$ and not parallel to a sub-surface of $\partial{M}$, then $F$ is
an {\em essential} surface in $M$.

A properly embedded surface $F$ in a 3-manifold $M$ is {\em $\partial$-compressible} (boundary-compressible) if either
\begin{itemize}
\item[(1)] $F$ is a trivial disk in $M$, or
\item[(2)] $F$ is not a disk and there exists a disk $D\subset M$ such that
$D\cap F=\alpha$ is an arc in $\partial D$, $D\cap \partial M=\beta$ is an arc in $\partial D$, with $\alpha\cap \beta= \partial \alpha=\partial\beta$ and $\alpha\cup \beta= \partial D$, and $\alpha$ is essential on $F$. $D$ is also called a $\partial$-compression disk of $F$.
\end{itemize}
$F$ is {\em $\partial$-incompressible} if it is not  $\partial$-compressible in $M$.

A {\em handlebody} $H$ of genus $n$ is a 3-manifold such that there exists a collection ${\mathcal D}=\{D_1,\cdots,D_n\}$ of $n$ pairwise disjoint properly embedded disks in $H$ and the manifold obtained by cutting $H$ open along $\mathcal D$ is a 3-ball. We call $\mathcal D$ a complete disk system for $H$. A {\em longitude} of the handlebody $H$ is a simple closed curve on $\partial H$ which intersects the boundary of an essential disk of $H$ in one point.

Let $H$ be a handlebody of genus $n$, and ${\mathcal J}=\{J_1,\cdots,J_p\}$ a collection of simple closed curves on $\partial H$. We say that ${\mathcal J}$ is {\em primitive} if $[J_1],\cdots,[J_p]\in \pi_1(H)$ (after some conjugations) can be extended to a generator set of $\pi_1(H)$. It is clear that a simple closed curve on $\partial H$ is primitive if and only if it is a longitude of $H$.

\begin{lem}\label{lem1}
Let $H$ be a handlebody of genus $n$, and ${\mathcal J}=\{J_1,\cdots,J_p\}$ ($p<n$) a collection of simple closed curves on $\partial H$. Suppose that there exists a collection ${\mathcal D}=\{D_1,\cdots,D_p\}$ of pairwise disjoint disks properly embedded in $H$ such that $|J_i\cap \partial D_i|=1$ for $1\leq i\leq p$, and $|J_i\cap \partial D_j|=0$ for $1\leq i\neq j\leq p$. Then ${\mathcal J}$ is  primitive.
\end{lem}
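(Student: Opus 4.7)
The plan is to realize $[J_1], \ldots, [J_p]$ as the initial segment of a free basis of $\pi_1(H) \cong F_n$, using the given disks to produce that basis by duality. The first step is to show that each $D_i$ is non-separating in $H$ and that $\mathcal{D}$ extends to a complete disk system $\{D_1, \ldots, D_n\}$. Because $|J_i \cap \partial D_i|=1$ is odd, the mod-$2$ intersection number of $J_i$ and $\partial D_i$ on the surface $\partial H$ is nonzero, so $\partial D_i$ is non-separating in $\partial H$ and hence $D_i$ is non-separating in $H$. The same argument applied in the handlebody obtained by cutting $H$ along $D_1, \ldots, D_{i-1}$ (in which $J_i$ survives as a simple closed curve on the boundary, being disjoint from the other $\partial D_j$'s) shows $D_i$ stays non-separating at each stage. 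Therefore cutting $H$ along $\mathcal{D}$ produces a connected handlebody of genus $n-p$, and any complete disk system of the cut-open manifold, taken together with $\mathcal{D}$, supplies a complete disk system $\{D_1, \ldots, D_n\}$ of $H$.

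Next, let $x_1, \ldots, x_n$ be the free basis of $\pi_1(H)$ dual to $\{D_1, \ldots, D_n\}$, with $x_j$ represented by a loop meeting $D_j$ transversely once and disjoint from the remaining disks. Putting $J_i$ in general position with the full disk system and reading off the sequence of intersections, the hypotheses $|J_i \cap \partial D_j|=0$ for $j \le p$, $j \ne i$, and $|J_i \cap \partial D_i|=1$ force $[J_i]$ to be represented by a word of the form
\[
w_i \;=\; u_i\, x_i^{\epsilon_i}\, v_i, \qquad \epsilon_i \in \{+1,-1\},
\]
with $u_i, v_i \in \langle x_{p+1}, \ldots, x_n \rangle$: no letter $x_j$ with $j \le p$, $j \ne i$ can appear, and $x_i$ appears exactly once.

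Rearranging, $x_i^{\epsilon_i} = u_i^{-1} w_i v_i^{-1}$ with $u_i, v_i$ already in $\langle x_{p+1}, \ldots, x_n \rangle$, so $\{w_1, \ldots, w_p, x_{p+1}, \ldots, x_n\}$ generates $F_n$. A finitely generated free group is Hopfian, so any $n$-element generating set of $F_n$ is automatically a free basis; hence $\{[J_1], \ldots, [J_p]\}$ extends to a basis of $\pi_1(H)$, and $\mathcal{J}$ is primitive. The main obstacle is the non-separating step in the first paragraph: without confirming that each $D_i$ stays non-separating through the sequential cuts, one cannot enlarge $\mathcal{D}$ to a complete disk system, and the clean word form $w_i = u_i\, x_i^{\epsilon_i}\, v_i$ that drives the algebra would fail. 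Once the mod-$2$ intersection trick with $J_i$ as witness supplies this, the rest of the argument reduces to elementary word manipulation in a free group.
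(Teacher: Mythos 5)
Your proof is correct and takes essentially the same route as the paper's: extend $\{D_1,\ldots,D_p\}$ to a complete disk system, read each $[J_i]$ in the dual free basis as a word $u_i x_i^{\epsilon_i} v_i$ with $u_i,v_i\in\langle x_{p+1},\ldots,x_n\rangle$, and finish by elementary free-group manipulation (the paper invokes Nielsen transformations where you invoke Hopficity of $F_n$ --- an equivalent step). Your first paragraph, with the mod-$2$ intersection argument and the sequential cutting, actually justifies the point the paper dispatches with ``without loss of generality we can assume $\Delta_i=D_i$,'' which is a worthwhile addition rather than a divergence.
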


\begin{proof}
It is well-known that $\pi_{1} (H)$ is a free group on $n$ generators. A choice of a base-point and a complete system $\Delta = \{ \Delta_{1}, \ldots \Delta_{n} \}$ of oriented meridian disks determines a presentation of $\pi_{1} (H)$, namely, for any based oriented loop $\alpha$ (with the base point disjoint from $\Delta$ and in general position with $\Delta$) in $H$, start from the base point, go along $\alpha$ in the orientation, write down $x_{i} $ every time the loop passes through the disk $\Delta_{i}$ in a direction consistent with its normal orientation and $x_{i}^{-1}$ if the direction is not consistent. Thus, any simple closed curve $J$ on $\partial H$, when viewed as a (conjugate class) in $\pi_{1} (H)$ will be presented as a word in generators $x_{1}, \ldots, x_{n}$. Now consider a collection ${\mathcal J}=\{J_1,\cdots,J_p\}$ of curves from the statement. Without loss of generality we can assume that  a complete collection $\Delta$ is such that $\Delta_{i} = D_{i}$ for $i=1, \ldots, p$. Thus, a curve $J_{i}$ will be determined by a word $y_{i}$ in $\pi_{1} (H)$ which contains only once letter $x_{i}^{\varepsilon_{i}}$, with $\varepsilon_{i} = \pm 1$, and doesn't contain letters $x_{k}$ where $k \in \{1, 2, \ldots, p \} \setminus \{i\}$:
$$
y_{i} = u_{i} (x_{p+1}, \ldots, x_{n}) x_{i}^{\varepsilon_{i}} v_{i} (x_{p+1}, \ldots, x_{n}),
$$
where $u_{i}$ and $v_{i}$ are words in generators $x_{p+1}, \ldots, x_{n}$. Without loss of generality we can assume that curves $J_{i}$ are oriented in such a way that $\varepsilon_{i} = 1$, $i=1, \ldots, p$. Hence for $i = 1, \ldots, p$ we have
$$
 x_{i} = u_{i}^{-1} (x_{p+1}, \ldots, x_{n}) y_{i} v_{i}^{-1} (x_{p+1}, \ldots, x_{n}).
$$
For $i=p+1. \ldots, n$ we define $y_{i} = x_{i}$.
Since sets $\{ x_{1}, x_{2}, \ldots, x_{n}\}$ and $y_{1},y_{2},  \ldots, y_{n}$ are related by Nilsen transformations~\cite{MKS},
we conclude that $y_{1}, y_{2}, \ldots, y_{n}$ are generators of $\pi_{1} (H)$. Hence the collection $\mathcal J$ is primitive.
\end{proof}

\begin{lem}\label{lem2}
Let $M=M_1\cup_F M_2$ be a surface sum of two irreducible 3-manifolds $M_1$ and $M_2$ along $F$. Suppose that $F$ is incompressible in both $M_1$ and $M_2$. Then $M$ is irreducible.
\end{lem}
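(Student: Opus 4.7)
The plan is to argue by contradiction. Suppose $M$ is reducible; then it contains an essential $2$-sphere $S$. Isotope $S$ to be transverse to $F$, and among all essential $2$-spheres in $M$ transverse to $F$, choose one for which the number of intersection circles $|S \cap F|$ is minimal. The intersection $S \cap F$ is a finite collection of pairwise disjoint simple closed curves on the sphere $S$. If this collection is empty, then $S$ lies entirely in one of the pieces, say $M_i$, and the irreducibility of $M_i$ forces $S$ to bound a $3$-ball there, contradicting the essentiality of $S$ in $M$.

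Assume therefore that $S \cap F \neq \emptyset$, and select a component $c$ of $S \cap F$ that is innermost on $S$, so that $c$ bounds a disk $D \subset S$ whose interior is disjoint from $F$. The disk $D$ lies in one of the pieces, say $M_i$, and is properly embedded with $\partial D = c \subset F \subset \partial M_i$. The incompressibility hypothesis on $F$ in $M_i$ rules out the possibility that $c$ is essential in $F$, since in that case $D$ would be a compressing disk. Hence $c$ bounds a disk $D' \subset F$, the union $D \cup_c D'$ is an embedded $2$-sphere in $M_i$, and the irreducibility of $M_i$ supplies a $3$-ball $B \subset M_i$ bounded by it.

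I would finish by using $B$ to construct an essential $2$-sphere $S'$ isotopic to $S$ with strictly fewer intersection circles with $F$, contradicting the minimal choice. Concretely, isotope $D \subset S$ across $B$ onto $D'$ and then slightly off $F$ into the opposite piece $M_j$. Since $c$ was innermost, $\mathrm{int}(D) \cap F = \emptyset$, so the only component of $S \cap F$ lost is $c$ itself, while the pushed-off copy of $D'$ contributes no new intersections with $F$; therefore $|S' \cap F| = |S \cap F| - 1$. The main obstacle is precisely this last step: one has to justify that the isotopy can be supported in a neighborhood of $B \cup D'$ and does not create new transverse intersections elsewhere. Here the innermost choice of $c$ is indispensable, since without it the disk $D'$ on $F$ might enclose other components of $S \cap F$ and the pushed-off sphere could inherit fresh intersections that spoil the count.
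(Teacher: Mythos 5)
The paper does not give an internal proof of this lemma (it simply cites [Lei1]), so your argument has to stand on its own; the strategy you chose -- minimize $|S\cap F|$, rule out the empty case by irreducibility of the pieces, take an innermost circle on $S$, use incompressibility to get $D'\subset F$ and irreducibility of $M_i$ to get a ball $B$ -- is indeed the standard route and is essentially correct up to the last step. But the last step, which you yourself flag as the main obstacle, is resolved incorrectly: choosing $c$ innermost \emph{on $S$} only guarantees $\mathrm{int}(D)\cap F=\emptyset$; it gives no control whatsoever over $\mathrm{int}(D')\subset F$, which may perfectly well contain other circles of $S\cap F$ (those belong to the complementary disk $S\setminus D$, not to $D$). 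Consequently your claims that ``the pushed-off copy of $D'$ contributes no new intersections'' and that $|S'\cap F|=|S\cap F|-1$ are unjustified: if $\mathrm{int}(D')$ meets $S$, a parallel copy of $D'$ pushed into $M_j$ hits the sheets of $S$ running through the collar near those circles, so the naive replacement $(S\setminus D)\cup D''$ is not even an embedded sphere, and the ball $B$ may contain further sheets of $S$, so an isotopy ``supported near $B\cup D'$'' is not the innocuous move you describe.

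The gap is fixable in two standard ways, and you should do one of them explicitly. (a) Construct the ambient isotopy across $B$ carefully: every component of $S\cap M_i$ with boundary in $\mathrm{int}(D')$ is entirely contained in $B$ (it cannot cross $D$ or $F$), so a push of $B$ across $D'$ into a small collar on the $M_j$ side carries $D$ \emph{and} all these sheets into $\mathrm{int}(M_j)$, while points of $M_j$ in the support stay in $M_j$ and nothing outside the support moves; then $S'=h_1(S)$ is ambiently isotopic to $S$ (hence still essential) and $S'\cap F$ consists of the circles of $S\cap F$ outside $D'$, so the count drops by at least one -- strict decrease is all you need, not exactly one. (b) Alternatively, avoid the isotopy: since every circle of $S\cap F$ lying in $D'$ bounds a disk in $D'$, choose one that is innermost on $F$, so it bounds a disk in $F$ with interior disjoint from $S$, surger $S$ along this disk into two spheres each meeting $F$ in fewer circles, and use minimality together with the observation that if both surgered spheres bound balls then so does $S$. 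Either completion turns your outline into a correct proof; as written, the justification of the final counting/embeddedness step is wrong.
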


\begin{proof}
See~\cite{Lei1}.
\end{proof}

The following lemma ~\ref{lem3} is a well-known fact, for a proof, refer to \cite{Jaco3}.

\begin{lem}\label{lem3}
Let $M$ be a compact irreducible 3-manifold with non-empty boundary. If $\pi_{1}(M)$ is free, then $M$ is a handlebody.
\end{lem}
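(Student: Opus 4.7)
The plan is to proceed by induction on the rank $n$ of the free group $\pi_1(M)$, which is finitely generated because $M$ is compact.

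For the base case $n=0$, I would show $M = B^3$. Since $\pi_1(M)=1$, any boundary component $F$ of positive genus has $\pi_1(F) \to \pi_1(M)$ with nontrivial kernel, so by the Loop Theorem one extracts an essential compressing disk in $M$. Compressing along it strictly reduces $-\chi(\partial M)$ while preserving irreducibility and the triviality of $\pi_1$ (by van Kampen). After finitely many compressions every boundary component is a $2$-sphere; irreducibility together with connectedness then forces $M=B^3$, which is a handlebody of genus $0$.

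For the inductive step $n \geq 1$, I would first argue that $\partial M$ is compressible. If it were not, the Loop Theorem would make $\pi_1(F) \hookrightarrow \pi_1(M)$ injective for every component $F$, and then by Nielsen--Schreier each $\pi_1(F)$ would be free as a subgroup of a free group. But the fundamental group of a closed surface of genus $\geq 1$ is not free, so every component of $\partial M$ would be a $2$-sphere, and irreducibility would force $M=B^3$, contradicting $n \geq 1$. Hence there is an essential compressing disk $D$.

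Let $M'$ be obtained by cutting $M$ along $D$; it remains irreducible, since an essential sphere in $M'$ would produce one in $M$. If $D$ is non-separating, $M'$ is connected and van Kampen gives $\pi_1(M) \cong \pi_1(M') \ast \mathbb{Z}$, so $\pi_1(M')$ is free of rank $n-1$; by induction $M'$ is a handlebody, and reattaching the $1$-handle dual to $D$ produces a handlebody. If $D$ is separating, then $M' = M_1 \sqcup M_2$ with $\pi_1(M) \cong \pi_1(M_1) \ast \pi_1(M_2)$; each $\pi_1(M_i)$ is a subgroup of the free group $\pi_1(M)$ and is therefore free, while essentiality of $D$ rules out either piece being a ball, so both ranks lie in $\{1, \ldots, n-1\}$. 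Induction makes each $M_i$ a handlebody, and gluing them along a disk in their boundaries yields a handlebody. The main obstacle will be making the induction bookkeeping airtight: verifying that irreducibility and freeness of $\pi_1$ persist after the cut, and in the separating case confirming that neither piece has rank $0$ or $n$, so that the induction strictly descends.
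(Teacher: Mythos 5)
Your proposal is essentially correct, but note that the paper does not prove this lemma at all: it treats it as a classical fact and simply refers to Jaco \cite{Jaco3}. What you have written is, in substance, the standard proof of that classical theorem: use the Loop Theorem to produce an essential compressing disk whenever the rank is positive (an incompressible boundary component would embed a closed surface group, which is never free, into the free group $\pi_1(M)$, and an all-sphere boundary together with irreducibility forces a ball), cut along the disk, and induct on the rank, using van Kampen plus Nielsen--Schreier and Grushko to see that the pieces again satisfy the hypotheses with strictly smaller rank, and finally reassemble handlebodies along disks (a $1$-handle in the non-separating case, a boundary connected sum in the separating case). Two points would need tightening in a full write-up, both of the bookkeeping kind you yourself flagged. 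First, in the base case $n=0$ a compressing disk may separate, so after each compression you must run the argument on every (possibly disconnected) simply connected irreducible piece and then observe that regluing balls along boundary disks returns a ball; the phrase ``irreducibility together with connectedness forces $M=B^3$'' elides this reassembly. Second, in the separating case the inference ``neither piece is a ball, so both ranks lie in $\{1,\dots,n-1\}$'' should be argued in the other direction: a rank-$0$ piece would be a ball by your base case (the piece is compact, irreducible, with nonempty boundary), and a ball piece would make $\partial D$ bound a disk in $\partial M$, contradicting essentiality; with that order, Grushko gives the strict descent. With these points made explicit the induction closes, so your proposal supplies a self-contained argument where the paper offers only a citation.
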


\begin{lem}\label{lem4}
Let $H$ be a handlebody of genus $n$, and $J$ a simple closed curve on $\partial H$. Suppose that $\partial H - J$ is incompressible in $H$. Let $H_J$ be the 3-manifold obtained by attaching a 2-handle to $H$ along $J$. Then
\begin{itemize}
\item either $H_J$ has incompressible boundary,
\item or $H_J$ is a 3-ball,
\end{itemize}
in the latter case, $H$ is a solid torus, and $J$ is a longitude for $H$.
\end{lem}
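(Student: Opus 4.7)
I will argue that if $\partial H_J$ is compressible, then $H_J$ must be a $3$-ball; the structural claim then follows from $\pi_1(H_J) = F_n/\langle\langle [J]\rangle\rangle$. Indeed, if $H_J$ is a ball this quotient is trivial, and abelianizing forces $n=1$ (so $H$ is a solid torus), after which $[J]$ must generate $\pi_1(H)\cong \mathbb{Z}$, making $J$ a longitude. Note that $[J]\neq 1$ in $\pi_1(H)$ under our hypothesis: otherwise Dehn's Lemma provides a meridian disk for $J$ in $H$, which pushed slightly off $J$ would compress $\partial H - J$.

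If $\partial H_J$ is a trivial $2$-sphere, it bounds a $3$-ball in $H_J$ that must be all of $H_J$. Otherwise the Loop Theorem yields an essential compressing disk $E\subset H_J$ with $\partial E$ essential on $\partial H_J$. Writing the $2$-handle as $D^2\times I$ with core disk $D=D^2\times\{1/2\}$, attaching annulus $A=\partial D^2\times I$ (interior to $H_J$), and capping disks $D_0=D^2\times\{0\}$, $D_1=D^2\times\{1\}$, we set $F := \partial H \setminus \mathrm{int}(A)$, so that $\partial H_J = F \cup D_0 \cup D_1$. The plan is to simplify $E$ by standard moves and eventually realize it as a compressing disk of $F$ in $H$, contradicting the hypothesis.

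The simplification proceeds in three stages: (i) innermost-disk surgery on $D$ achieves $E\cap D=\emptyset$ while preserving $\partial E$; (ii) since every arc of $\partial E$ in a disk $D_i$ is trivial, outermost-arc moves push $\partial E$ entirely into $F$, and the resulting curve must be essential on $F$, for otherwise it would bound a disk in $\partial H_J$, contradicting essentiality; (iii) innermost-trivial-circle surgery on $A$ leaves $E\cap A$ consisting only of circles parallel to $J$ in $A$. If $E\cap A=\emptyset$ at this point, $E$ lies in $H$ as a compressing disk of $F$ with $\partial E$ essential on $F$, yielding the desired contradiction.

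The main obstacle is the remaining case $E\cap A\neq\emptyset$. Pick a subdisk $E_0\subset E$ bounded by a circle $c\in E\cap A$ with $\mathrm{int}\,E_0\cap A=\emptyset$. Either $\mathrm{int}\,E_0$ lies in $H$ --- in which case $E_0$ compresses $A$ in $H$, and pushing $c$ (parallel to $J$) slightly into $F$ produces a curve parallel to $\partial F$ that bounds a disk in $H$ and is essential on $F$ (using that $J$ essential on $\partial H$ rules out $F$ containing a disk component), again contradicting the incompressibility of $F$ --- or $\mathrm{int}\,E_0$ lies in the $2$-handle. In the latter subcase, Alexander's Theorem shows $E_0$ is isotopic rel $\partial$ to a horizontal cross section $D^2\times\{s\}$, and the sub-ball of the $2$-handle between $E_0$ and the adjacent capping disk $D_0$ or $D_1$ permits a handle-slide that replaces $E_0$ by a disk with interior in $H$, reducing to the previous case. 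Executing this handle-slide so as to strictly decrease a carefully chosen complexity (roughly $|E\cap A|$ paired with a secondary invariant measuring proximity of the innermost essential circle to $\partial D_0\cup\partial D_1$) is the technical crux of the argument and the step I expect to require the most care; this is the essence of Jaco's classical handle-addition argument.
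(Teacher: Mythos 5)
The paper does not actually prove this lemma: it is quoted as the Handle Addition Theorem, with references to Przytycki and to Jaco, so there is no internal argument to match your proposal against. Judged on its own, your reduction of the ``ball'' alternative to group theory is fine: $\pi_1(H_J)\cong F_n/\langle\langle [J]\rangle\rangle$ trivial forces $n=1$ after abelianizing, and a primitive simple closed curve on the boundary of a solid torus is a longitude; your observation that $[J]\neq 1$ (via Dehn's Lemma and the incompressibility of $\partial H-J$) is also correct and needed. The problem is that the heart of the lemma --- showing that a compressing disk $E$ for $\partial H_J$ of positive genus leads to a contradiction --- is exactly the part you defer, and the specific move you sketch for the hard case does not work. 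If $E_0\subset E$ is an innermost subdisk lying in the $2$-handle with $\partial E_0$ a core circle of the attaching annulus $A$, then no handle-slide or isotopy inside $H_J$ can replace $E_0$ by a disk with the same boundary on $A$ and interior in $H$: such a disk would be a disk in $H$ bounded by a curve isotopic to $J$, which is impossible precisely because $[J]\neq 1$, as you yourself established. Nor can $E_0$ be pushed ``past'' $D_0$ or $D_1$, since these are boundary disks of $H_J$; the parallel copies of the cocore in $E\cap A$ cannot be eliminated by innermost/outermost moves, and this is the well-known obstruction that makes the Handle Addition Theorem a genuine theorem rather than a routine exercise. Declaring that one should ``strictly decrease a carefully chosen complexity'' without exhibiting that complexity and verifying monotonicity under each move (including the boundary moves off $D_0,D_1$, which can create new intersections with $A$) leaves the essential content unproved.

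In addition, a few smaller points would need tightening even in the outline: the Loop Theorem is not what produces the compressing disk (compressibility of $\partial H_J$ gives it by definition, once the sphere cases are split off as you do); the non-trivial sphere case ($H$ a solid torus, $J$ a $(p,q)$-curve with $|p|\geq 2$, $H_J$ a punctured lens space) does occur under the hypotheses and should be addressed explicitly against whatever convention of ``incompressible boundary'' is in force; and the boundary-pushing in stage (ii) should be checked not to destroy what stage (i) achieved. To complete the proof you would need to import one of the actual arguments (Przytycki's for handlebodies, Jaco's general combinatorial argument, or the Casson--Gordon/Scharlemann approaches), which is presumably why the paper simply cites them.
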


Lemma \ref{lem4} was first proved by Przytycki \cite{Przy} in 1983, then was generalized to the well known Handle Addition Theorem by Jaco \cite{Jaco2} in 1984.

\begin{lem}\label{lem5}
Let $H$ be a surface sum of handlebodies $H_{1}$ and $H_{2}$ along a once-punctured torus $T$. Suppose that $T$ is incompressible in both $H_1$ and $H_2$. If there exists a collection $\{\delta, \sigma\}$ of simple closed curves on $T$ such that
\begin{itemize}
\item  either $\{\delta, \sigma\}$ is primitive in $H_1$ or $H_2$,
\item or $\{\delta\}$ is primitive in $H_1$, and $\{\sigma\}$ is primitive in $H_2$,
\end{itemize}
then $H$ is a handlebody.
\end{lem}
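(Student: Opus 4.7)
The plan combines Seifert--van Kampen for Case 1 with a geometric reduction to Theorem \ref{thm1} for Case 2, after noting the preliminary facts. Irreducibility of $H$ follows from Lemma \ref{lem2}, and $\partial H \supseteq \partial H_1 \setminus T \neq \emptyset$. In both cases I take the natural supplementary assumption that $\{\delta, \sigma\}$ is a basis of $\pi_1(T) \cong F_2$, corresponding to the generic situation in which the two simple closed curves meet in one point on the once-punctured torus $T$; this is the only configuration for which the primitivity hypothesis captures $\iota_i(\pi_1(T))$ itself rather than a proper subgroup.

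For Case 1, when $\{\delta, \sigma\}$ is primitive in one handlebody, say $H_1$, the inclusions $\iota_i: \pi_1(T) \hookrightarrow \pi_1(H_i)$ are injective by incompressibility of $T$, and $\pi_1(H) \cong \pi_1(H_1) *_{\pi_1(T)} \pi_1(H_2)$ via Seifert--van Kampen. The primitivity hypothesis says that $\iota_1([\delta])$ and $\iota_1([\sigma])$ extend to a free basis of $\pi_1(H_1) \cong F_{n_1}$, and because $\{\delta, \sigma\}$ is a basis of $\pi_1(T)$, this makes $\iota_1(\pi_1(T))$ a rank-two free factor of $\pi_1(H_1)$. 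Writing $\pi_1(H_1) = \iota_1(\pi_1(T)) * K$ with $K$ free of rank $n_1 - 2$, the amalgamation collapses:
\[
\pi_1(H) \cong (\iota_1(\pi_1(T)) * K) *_{\pi_1(T)} \pi_1(H_2) \cong \pi_1(H_2) * K,
\]
which is a free group, so Lemma \ref{lem3} concludes.

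For Case 2 I take a geometric path, reducing the once-punctured torus sum to an annulus sum covered by Theorem \ref{thm1}. Since $\delta$ is a longitude of $H_1$, there is an essential disk $E_1 \subset H_1$ with $|\partial E_1 \cap \delta| = 1$; using the incompressibility of $T$ in $H_1$ with standard innermost-disk and outermost-arc arguments, I would isotope $E_1$ so that $\partial E_1 \cap T$ is a single essential arc $\alpha$ on $T$, dual to $\delta$ and disjoint from $\sigma$. Then $E_1$ serves as a $\partial$-compression disk for $T$ inside $H$. Performing this compression transfers the $3$-ball $N(E_1)$ from the $H_1$ side across $T$ to the $H_2$ side, yielding a new separating surface $T' \subset H$ with $\chi(T') = 0$. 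Because $\sigma$ is disjoint from $\alpha$ and essential on $T$, it persists as an essential simple closed curve on $T'$, so $T'$ is an annulus whose core is isotopic to $\sigma$. The resulting decomposition is $H = H_1^* \cup_{T'} H_2$, with $H_1^* = H_1 \setminus N(E_1)$ a handlebody of genus $n_1 - 1$ (since $\partial E_1$ is non-separating on $\partial H_1$, being dual to the longitude $\delta$), and with $H_2$ unchanged. The core of $T'$ is $\sigma$, a longitude of $H_2$ by hypothesis, so Theorem \ref{thm1} concludes that $H$ is a handlebody.

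The main obstacle is the geometric preparation in Case 2: arranging $E_1$ so that $\partial E_1 \cap T$ reduces to a single essential arc that is dual to $\delta$ and disjoint from $\sigma$. This requires a careful sequence of innermost-disk and outermost-arc moves exploiting the incompressibility of $T$, together with the observation on the once-punctured torus that a dual arc to $\delta$ disjoint from $\sigma$ exists precisely when $\delta$ and $\sigma$ meet in one point.
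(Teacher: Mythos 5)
Your Case 1 is fine and is essentially the paper's argument: Seifert--van Kampen plus Lemma \ref{lem3}, with the added (and correct) observation that when $\{\delta,\sigma\}$ is primitive in $H_1$ and $\delta,\sigma$ generate $\pi_1(T)$, the subgroup $\iota_1(\pi_1(T))$ is a free factor, so the amalgam collapses to $\pi_1(H_2)*K$. (Both you and the paper quietly assume $\{\delta,\sigma\}$ is a basis of $\pi_1(T)$ and that the conjugations in the definition of ``primitive'' can be taken compatibly; since the paper's own proof says ``since $\sigma$ and $\delta$ are free generators of $\pi_1(T)$'', this supplementary hypothesis is shared and I will not count it against you.)

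Case 2 is where you depart from the paper, and there is a genuine gap. The paper treats the mixed case algebraically again (writing $\pi_1(H_1)=\langle[\delta],x_1,\ldots,x_{n_1-1}\rangle$, $\pi_1(H_2)=\langle[\sigma],y_1,\ldots,y_{n_2-1}\rangle$ and eliminating $[\delta]$ and $[\sigma]$ via the amalgamation relations); you instead try to reverse the geometric step in the proof of Theorem \ref{thm2}: produce a $\partial$-compressing disk $E_1\subset H_1$ with $\partial E_1\cap T$ a \emph{single} essential arc dual to $\delta$ and disjoint from $\sigma$, compress $T$ to an annulus with core $\sigma$, and invoke Theorem \ref{thm1}. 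The unproved step is exactly the existence of such an $E_1$, and the tools you cite do not deliver it. Since $T$ is a subsurface of $\partial H_1$, the disk $E_1$ meets $T$ only along $\partial E_1$, so there are no interior circles to handle by innermost-disk arguments; what you actually face is the collection of arcs $\partial E_1\cap T$. Arcs that are trivial in $T$ can indeed be isotoped away, but after that you may still have, besides the one arc crossing $\delta$ once, several arcs that are \emph{essential} in $T$ and disjoint from $\delta$ (arcs of the slope of $\delta$); these cannot be removed by isotopy, and no outermost-arc move is available because $E_1\cap T\subset\partial E_1$. Moreover, nothing in the hypothesis ``$\delta$ is a longitude'' controls the slope of the dual arc relative to $\sigma$: the arc crossing $\delta$ once could cross $\sigma$ arbitrarily often, and your remark that an abstract dual arc disjoint from $\sigma$ exists on $T$ does not make the boundary of an actual essential disk realize it. If the arc is not disjoint from $\sigma$, the core of the resulting annulus is not $\sigma$ and Theorem \ref{thm1} no longer applies to the side where you know a longitude. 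So the crucial geometric claim needs a real proof (or the case should be handled algebraically, as the paper does); as written, Case 2 is incomplete at its central step, which you yourself flag as ``the main obstacle.''
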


\begin{proof}
An example of a collection $\{ \delta, \sigma\}$ is presented in Fig.~\ref{fig1}.

\begin{figure}[!htb]
\centering
  \includegraphics[width=5cm]{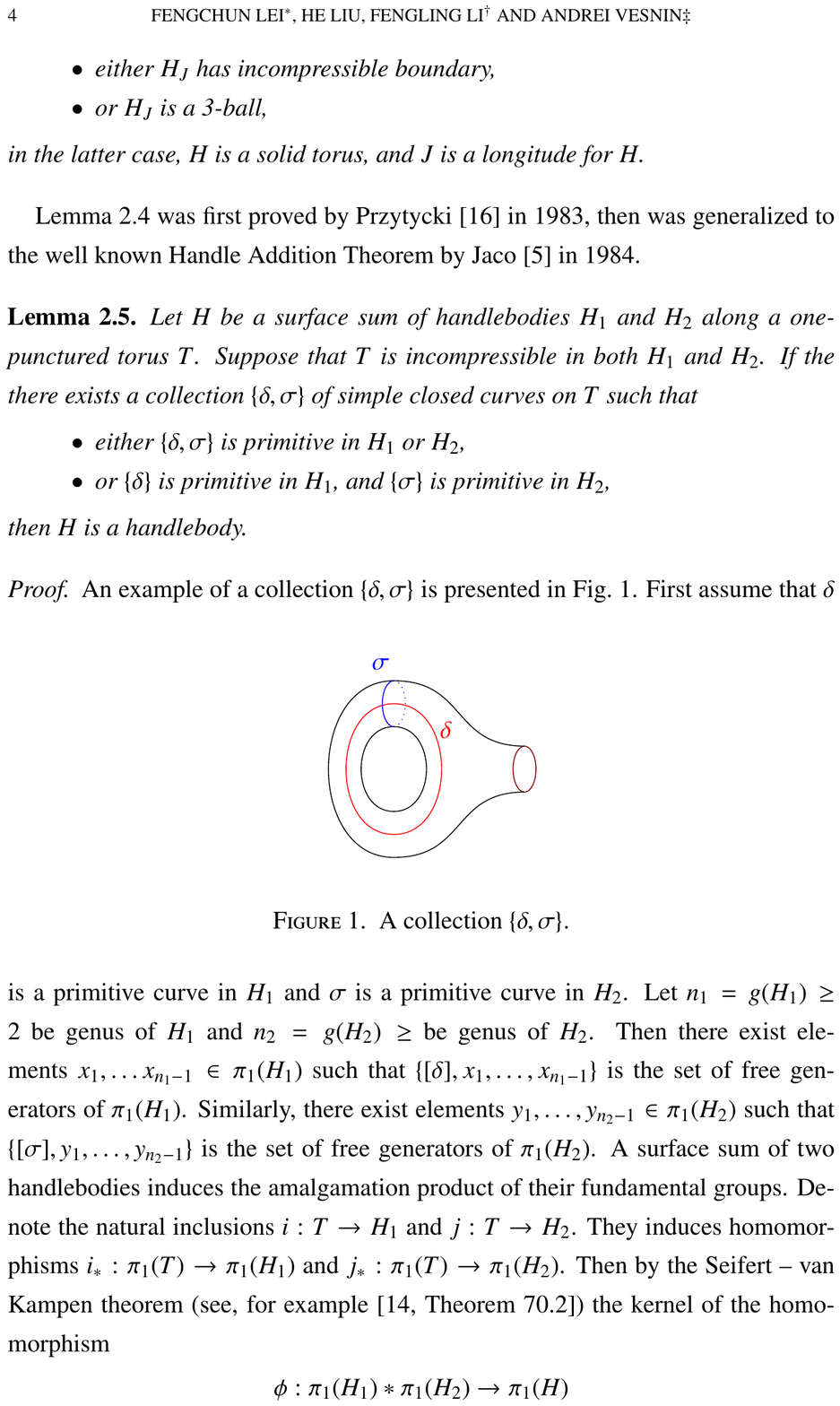}
  \caption{A collection $\{ \delta, \sigma \}$.} \label{fig1}
\end{figure}
 First assume that $\delta$ is a primitive curve in $H_{1}$ and $\sigma$ is a primitive curve in $H_{2}$.
Let $n_{1} = g(H_{1}) \geq 2$ be genus of $H_{1}$ and $n_{2} = g(H_{2}) \geq 2$ be genus of $H_{2}$. Then there exist elements $x_{1}, \ldots x_{n_{1}-1} \in \pi_{1} (H_{1})$ such that $\{ [\delta], x_{1}, \ldots, x_{n_{1}-1} \}$ is the set of free generators of $\pi_{1}(H_{1})$. Similarly, there exist elements $y_{1}, \ldots, y_{n_{2}-1} \in \pi_{1} (H_{2})$ such that $\{ [\sigma], y_{1}, \ldots, y_{n_{2}-1} \}$ is the set of free generators of $\pi_{1} (H_{2})$. An amalgamation product of two handlebodies induces the amalgamation  product of their fundamental groups. Denote the natural inclusions $i : T \to H_{1}$ and $j :  T \to H_{2}$. They induces homomorphisms
$i_{*} : \pi_{1} (T) \to \pi_{1} (H_{1})$ and $j_{*} : \pi_{1} (T)  \to \pi_{1} (H_{2} )$.   Then by the Seifert~-- van Kampen theorem (see, for example~\cite[Theorem~70.2]{Munkres})
the kernel of the homomorphism
$$
\phi : \pi_{1} (H_{1}) * \pi_{1} (H_{2}) \to \pi_{1} (H)
$$
is the least normal subgroup $N$ of the free product that contains elements represented by words of the form $i_{*} (\sigma)^{-1} j_{*} (\sigma)$ and $i_{*} (\delta)^{-1} j_{*} (\delta)$.  Since $\sigma$ and $\delta$ are free generators of $\pi_{1} (T)$ we get that $\pi_{1} (H)$ is a free group with free generators $x_{1}, \ldots, x_{n_{1}-1}, y_{1}, \ldots, y_{n_{2}-1}$. In virtue of Lemma \ref{lem3}, $H$ is a handlebody.

The proof of the other case is similar.
\end{proof}

\begin{thm}\label{thm0}
Let $H$ be a handlebody of genus $n$, and $J$ a simple closed curve on $\partial H$. Let $H_J$ be the 3-manifold obtained by attaching a 2-handle to $H$ along $J$. Then $H_J$ is a handlebody if and only if $J$ is a longitude for $H$.
\end{thm}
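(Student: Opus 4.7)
Plan. I plan to prove the two directions of the iff separately. The forward direction is short and handle-theoretic, while the backward direction is the substantive one, using a fundamental-group computation combined with a classical combinatorial group theory fact.

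For the forward direction, suppose $J$ is a longitude of $H$. Then there exists an essential disk $D \subset H$ with $|\partial D \cap J| = 1$. I would pick a handle decomposition of $H$ consisting of a $0$-handle and $n$ $1$-handles in which $D$ appears as the co-core of one of the $1$-handles, call it $h_1$. Then in the decomposition $H_J = H \cup h_2$, the attaching circle of $h_2$ (namely $J$) meets the belt circle $\partial D$ of $h_1$ transversely in one point. By the handle cancellation lemma, $h_1$ and $h_2$ form a cancelling pair, producing a handle decomposition of $H_J$ with one $0$-handle and $n-1$ $1$-handles; hence $H_J$ is a handlebody of genus $n-1$.

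For the backward direction, suppose $H_J$ is a handlebody. An Euler characteristic computation gives $\chi(H_J) = \chi(H) + 1 = 2-n$, so $H_J$ has genus $n-1$ and $\pi_1(H_J) \cong F_{n-1}$. The Seifert--van Kampen theorem (applied as in the proof of Lemma~\ref{lem5}) yields $\pi_1(H_J) = F_n / \langle\langle [J] \rangle\rangle$, so the one-relator quotient $F_n / \langle\langle [J] \rangle\rangle$ is isomorphic to $F_{n-1}$. By a classical result in combinatorial group theory stemming from Magnus's work (see~\cite{MKS}), this forces $[J]$ to be a primitive element of $F_n$, i.e., part of a free basis of $\pi_1(H)$. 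A standard handlebody-theoretic argument dual to Lemma~\ref{lem1} then produces an essential disk $D \subset H$ with $|\partial D \cap J| = 1$, so $J$ is a longitude.

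The main obstacle I anticipate is the final step of the backward direction: realizing the algebraic primitivity of $[J] \in \pi_1(H)$ as the geometric fact that some essential disk meets $J$ in exactly one point. While classical, this step requires carefully interweaving Nielsen transformations on bases of $\pi_1(H)$ with handle slides and disk isotopies to promote an algebraic intersection number of $1$ into a geometric intersection number of $1$. As a backup, one can avoid this by inducting on $n$: the base case $n = 1$ follows from the lens-space classification, and the inductive step uses Lemma~\ref{lem4} to force $\partial H \setminus J$ to be compressible in $H$ (otherwise $\partial H_J$ would be incompressible, contradicting that $H_J$ is a handlebody of positive genus), producing a compressing disk disjoint from $J$ that reduces the problem to a handlebody of genus $n-1$.
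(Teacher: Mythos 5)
Your forward direction (handle cancellation) is fine and matches the paper's one--line ``the sufficiency is clear.'' The trouble is in your primary backward argument. Its first steps are sound: $\pi_1(H_J)\cong F_n/\langle\langle [J]\rangle\rangle$, the Euler characteristic forces genus $n-1$, and the fact that a one--relator quotient $F_n/\langle\langle r\rangle\rangle$ isomorphic to $F_{n-1}$ forces $r$ to be primitive is a genuine (though nontrivial) theorem. The gap is the last step: passing from ``$[J]$ is a primitive element of $\pi_1(H)$'' to ``some essential disk of $H$ meets $J$ in exactly one point.'' This is not a routine argument ``dual to Lemma~\ref{lem1}'': Lemma~\ref{lem1} is the easy direction (a dual disk system implies primitivity), whereas the converse for a simple closed curve on $\partial H$ is a substantial theorem, usually proved either via Zieschang's realization of free-group automorphisms by homeomorphisms of the handlebody together with Whitehead/Nielsen peak reduction, or else via exactly the 2-handle-addition argument you are trying to establish. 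Indeed, granting your steps 1--3, ``primitive implies longitude'' is essentially equivalent to the necessity direction of Theorem~\ref{thm0} itself, so as written your main route concentrates all of the geometric difficulty into an assertion you do not prove (and which you yourself flag as the main obstacle); ``interweaving Nielsen transformations with handle slides'' to upgrade algebraic intersection number $1$ to geometric intersection number $1$ is not a standard, citable step.

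Your backup plan, on the other hand, is essentially the paper's proof. The paper does not induct: it first rules out $J$ bounding a disk in $H$ (else $H_J$ is reducible), compresses $\partial H - J$ maximally to obtain $H'$ with $\partial H' - J$ incompressible, observes that $H_J$ is a handlebody iff $H'_J$ is, and applies Lemma~\ref{lem4} once; since a handlebody of positive genus has compressible boundary, only the alternative ``$H'_J$ is a ball, $H'$ is a solid torus, $J$ is a longitude of $H'$'' survives, and the longitude disk of $H'$ remains an essential disk of $H$ meeting $J$ once. Your induction on genus with the lens-space base case is the same mechanism packaged differently, and it works, provided you justify the point both you and the paper gloss over: that cutting $H_J$ along a compressing disk of $\partial H - J$ (which is properly embedded in $H_J$ and may be taken disjoint from the attached 2-handle) again yields a handlebody containing $J$ (e.g.\ each piece is irreducible with free fundamental group, then apply Lemma~\ref{lem3}), and that a disk of the cut-open handlebody meeting $J$ once is automatically essential in $H$ (a curve meeting a transverse simple closed curve once algebraically cannot bound a disk in $\partial H$). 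With those remarks your backup is a correct proof coinciding with the paper's; the primary route is incomplete without a proof or reference for the primitive-implies-longitude step.
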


\begin{proof} The sufficiency is clear.

We will prove the necessity.  Assume that $H_J$ is a handlebody. $J$ can not bound a disk in $H$. Otherwise, $H_J$ is reducible, contradicting to that $H_J$ is a handlebody. If $\partial H - J$ is compressible in $H$, make the maximal compressions of $\partial H - J$ in $H$ to get a handlebody $H'$ such that $\partial H' - J$ is incompressible in $H'$. It is clear that $H_J$ is a handlebody if and only if that $H_J'$ is a handlebody. It follows from Lemma \ref{lem4} that either $H_J'$ has incompressible boundary, or $H_J'$ is a 3-ball, in the latter case, $H'$ is a solid torus, and $J$ is a longitude for $H'$. Now $H_J'$ is a handlebody, so only the latter case happens. Thus $J$ is also a longitude for $H$. This completes the proof.
\end{proof}

\section{The proofs of the main results}
A simple closed curve $\alpha$ on an annulus $A$ is called a \emph{core} curve of $A$ if $\alpha$ is parallel to one boundary component of $A$.

We now come to the proofs of the main results.

\vskip 3mm\noindent
{\bf Proof of Theorem ~\ref{thm1}:} First assume that the core curve $\alpha$ of $A$ is a longitude for $H_2$, say. Let $D$ be an essential disk of $H_2$ such that $\alpha$ intersects $\partial D$ in one point. Let $T$ be a regular neighborhood of $\alpha\cup \partial D$ in $\partial H_2$. Then $T$ is a once-punctured torus, and $\beta=\partial T$ bounds a separating disk $\Delta$ in $H_2$, which cuts $H_2$ into a solid torus $H_2'$ ($T\cup \Delta=\partial H_2'$) and a handlebody $H_2''$ of genus $g(H_2)-1$. We may assume that $\beta\cap A=\emptyset$. Thus $H=H_1\cup_A H_2=H_1 \cup_A (H_2'\cup_\Delta H_2'')=(H_1 \cup_A H_2')\cup_\Delta H_2'' \cong H_1 \cup_\Delta H_2''$, where it is clear that $H_1 \cup_A H_2'\cong H_1$, so $H$ is a handlebody of genus $g(H_1)+g(H_2)-1$.

Now assume that $H=H_1\cup_A H_2$ is a handlebody. If the core curve $J$ of $A$ bounds a disk $D$ in $H_1$, say, let $N=D\times I$ be a regular neighborhood of $D$ in $H_1$ such that $N\cap \partial H_1=A$. Then $H=H_1\cup_A H_2=\overline{H_1-N}\cup (H_2\cup_A N)$. $H$ is a handlebody, so $H_2\cup_A N$ is a handlebody. By Theorem ~\ref{thm0}, $J$ is a longitude for $H_2$. Next we assume that $J$ bounds no disk in neither $H_1$ nor $H_2$.

Let ${\mathcal D}=\{D_1,\cdots,D_n\}$ ($n=g(H_1)+g(H_2)-1$) be a complete disk system for $H$ such that ${\mathcal D}$ and $A$ are in general position, and ${\mathcal D}\cap A=\bigcup_{i=1}^n D_i\cap A$ has minimal components over all such complete disk systems for $H$.

{\bf Case 1}. ${\mathcal D}\cap A=\emptyset$.

Then the core curve of $A$ is a longitude for either $H_1$ or $H_2$. The conclusion holds.

In fact, since $H$ is a handlebody of genus $n=g(H_1)+g(H_2)-1$, if we cut open $H$ along ${\mathcal D}$, we get a 3-ball. On the other hand, if we cut open $H_i$ along ${\mathcal D}\cap H_i$ to get $K_i$, then either $K_1$ is a solid torus and $K_2$ is a 3-ball, or $K_2$ is a solid torus and $K_1$ is a 3-ball. In both cases, the conclusion holds.

{\bf Case 2}. ${\mathcal D}\cap A\ne\emptyset$.

Each component of ${\mathcal D}\cap A$ is either a simple circle in the interior of some disk in ${\mathcal D}$, or a simple arc properly embedded in some disk in ${\mathcal D}$.

{\bf Claim 1}. No component of ${\mathcal D}\cap A$ is a circle.

Otherwise, let $\alpha$ be an innermost circle component of ${\mathcal D}\cap A$, say $\alpha\subset D_i$. $\alpha$ bounds a disk $E$ on $D_i$ with $A\cap int(E)=\emptyset$. $\alpha$ cannot be a core curve of $A$. Thus, $\alpha$ also bounds a disk in $A$. Let $\beta$ be a circle component of ${\mathcal D}\cap A$ which is innermost on $A$. Then $\beta$ bounds a disk $E'$ on $A$ with ${\mathcal D}\cap int(E')=\emptyset$. Say $\beta\subset D_j$. $\beta$ bounds a disk $E''$ in $D_j$. Set $S=E'\cup E''$. $S$ is a 2-sphere in $H$, so it bounds a 3-ball $B$ in $H$. We can use the 3-ball $B$ to make an isotopy to push $E''$ to pass through $E'$.  After the isotopy, the complete disk system ${\mathcal D}$ is changed to ${\mathcal D}'$. Clearly, ${\mathcal D}'\cap A$ has at least one component ($\beta$) less than that of ${\mathcal D}\cap A$. This contradicts to minimality of ${\mathcal D}\cap A$. So Claim 1 holds.

Thus, each component of ${\mathcal D}\cap A$ is a simple arc. Let $\gamma$ be an outermost arc component of ${\mathcal D}\cap A$, say, $\gamma$ is a properly embedded arc in $D_k$ in ${\mathcal D}$. $\gamma$ cuts out of a disk $\Sigma$ from $D_k$ with $A\cap int(\Sigma)=\emptyset$. $\gamma$ is also a properly embedded arc in $A$.

{\bf Claim 2}. The two end points of $\gamma$ can not lie in the same boundary component of $A$.

Otherwise, let $\gamma'$ be such an arc component of ${\mathcal D}\cap A$ which is outermost on $A$. Then $\gamma'$ cuts out of a disk $\Sigma'$ from $A$ with ${\mathcal D}\cap int(\Sigma')=\emptyset$. Say $\gamma'\subset D_l\in {\mathcal D}$. $\gamma'$ cuts $D_l$ into two disks $D_l'$ and $D_l''$. Set $D_{l1}=\Sigma'\cup D_l'$ and $D_{l2}=\Sigma'\cup D_l''$, and ${\mathcal D}'=\{D_1,\cdots,D_{l-1}, D_{l1}, D_{l+1},\cdots,D_n\}$, ${\mathcal D}''=\{D_1,\cdots,D_{l-1}, D_{l2}, D_{l+1},\cdots,D_n\}$. Then at least one of ${\mathcal D}'$ and ${\mathcal D}''$ is a complete disk system for $H$, say ${\mathcal D}'$, and after a small isotopy, at least the component $\gamma'$ has been removed, ${\mathcal D}'\cap A$ has at least one component less than that of ${\mathcal D}\cap A$. This again contradicts to minimality of ${\mathcal D}\cap A$. So Claim 2 holds.

Now let $\lambda$ be an outermost arc component of ${\mathcal D}\cap A$ whose two end points are lying in the different boundary components of $A$, say, $\lambda$ is a properly embedded arc in $D_m$ in ${\mathcal D}$. $\lambda$ cuts out of a disk $\Lambda$ from $D_m$ with $A\cap int(\Lambda)=\emptyset$. Then $\Lambda$ is a properly embedded disk in either $H_1$ or $H_2$. It is obvious that the core curve of $A$ intersects $\partial \Lambda$ in exact one point. Thus, the core curve of $A$ is a longitude for either $H_1$ or $H_2$.

This completes the proof of Theorem ~\ref{thm1}. \hfill$\Box$

\vskip 3mm\noindent
{\bf Proof of Theorem ~\ref{thm2}:}
Assume that $H=H_1\cup_T H_2$ is a handlebody. Set $n_i=g(H_i)$, $i=1,2$. Since $T$ is incompressible in both $H_1$ and $H_2$, $n_i\geq 2$, $i=1,2$. Thus $n=g(H)=n_1+n_2-2\geq 2$.

Let ${\mathcal D}=\{D_1,\cdots,D_n\}$ be a complete disk system for $H$ such that ${\mathcal D}$ and $T$ are in general position, and ${\mathcal D}\cap T=\bigcup_{i=1}^n D_i\cap A$ has minimal components over all such complete disk systems for $H$.

{\bf Claim 1}. ${\mathcal D}\cap A\ne\emptyset$.

Otherwise, after cutting open $H$ along ${\mathcal D}$, we get a 3-ball $B$ containing $T$. $T$ is compressible in $B$, so it is compressible in $H_1$ or $H_2$, a contradiction.

Each component of ${\mathcal D}\cap T$ is either a simple circle in the interior of some disk in ${\mathcal D}$, or a simple arc properly embedded in some disk in ${\mathcal D}$.

{\bf Claim 2}. No component of ${\mathcal D}\cap A$ is a circle.

Otherwise, let $\alpha$ be an innermost circle component of ${\mathcal D}\cap T$, say $\alpha\subset D_i$. $\alpha$ bounds a disk $E$ on $D_i$ with $A\cap int(E)=\emptyset$. Say $E\subset H_1$. Since $T$ is incompressible in $H_1$, $\alpha$ also bounds a disk in $T$. Let $\beta$ be a circle component of ${\mathcal D}\cap A$ which is innermost on $T$. Then $\beta$ bounds a disk $E'$ on $T$ with ${\mathcal D}\cap int(E')=\emptyset$. Say $\beta\subset D_j$. $\beta$ bounds a disk $E''$ in $D_j$. Set $S=E'\cup E''$. $S$ is a 2-sphere in $H$, so it bounds a 3-ball $B$ in $H$. We can use the 3-ball $B$ to make an isotopy to push $E''$ to pass through $E'$.  After the isotopy, the complete disk system ${\mathcal D}$ is changed to ${\mathcal D}'$. Clearly, ${\mathcal D}'\cap A$ has at least one component ($\beta$) less than that of ${\mathcal D}\cap A$. This contradicts to minimality of ${\mathcal D}\cap A$. So Claim 2 holds.

Thus, each component of ${\mathcal D}\cap T$ is a simple arc properly embedded in $T$ and some disk in ${\mathcal D}$. Let $\gamma$ be an outermost arc component of ${\mathcal D}\cap A$, say, $\gamma$ is a properly embedded arc in $D_k$ in ${\mathcal D}$. $\gamma$ cuts out of a disk $\Delta$ from $D_k$ with $T\cap int(\Delta)=\emptyset$. Say $\Delta\in H_1$.

{\bf Claim 3}. $\gamma$ is an essential arc in $T$.

Otherwise, let $\gamma'$ be such an arc component of ${\mathcal D}\cap T$ which is outermost on $T$. Then $\gamma'$ cuts out of a disk $\Delta'$ from $T$ with ${\mathcal D}\cap int(\Delta')=\emptyset$. Say $\gamma'\subset D_l\in {\mathcal D}$. $\gamma'$ cuts $D_l$ into two disks $D_l'$ and $D_l''$. Set $D_{l1}=\Sigma'\cup D_l'$ and $D_{l2}=\Sigma'\cup D_l''$, and ${\mathcal D}'=\{D_1,\cdots,D_{l-1}, D_{l1}, D_{l+1},\cdots,D_n\}$, ${\mathcal D}''=\{D_1,\cdots,D_{l-1}, D_{l2}, D_{l+1},\cdots,D_n\}$. Then at least one of ${\mathcal D}'$ and ${\mathcal D}''$ is a complete disk system for $H$, say ${\mathcal D}'$, and after a small isotopy, at least the component $\gamma'$ has been removed, ${\mathcal D}'\cap T$ has at least one component less than that of ${\mathcal D}\cap T$. This again contradicts to minimality of ${\mathcal D}\cap T$. So Claim 3 holds.

Thus, $\gamma$ is non-separating in $T$, so $\Delta$ is an non-separating essential disk in $H_1$. Denote $\gamma''=\overline{\partial \Delta -\gamma}$. Then $\gamma''$ is a arc lying in $\partial H_1$ (as well as in $\partial H$).

Let $N=\Delta\times I$ be a regular neighborhood of $\Delta$ in $H_1$ with $\Delta=\Delta\times \frac{1}{2}$ such that $N\cap T=\gamma\times I$ and $N\cap \partial H=\gamma''\times I$. Set $H_1'=\overline{H_1-N}$, $H_2'=H_2\cup_{\gamma\times I} N$, and $A=\overline{T-\gamma\times I}\bigcup \Delta\times 0\bigcup \Delta\times 1$. In fact, $A$ is the result of doing a $\partial$-compression on $T$ in $H$ along $\Delta$, see Figure 2 below.

\begin{figure}[!htb]
\centering
  \includegraphics[width=0.6\textwidth]{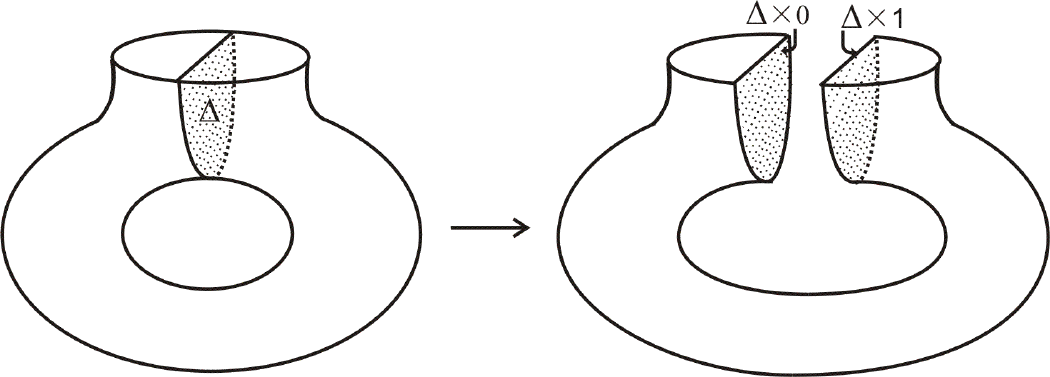}
  \caption{$\partial$-compression along $\Delta$}\label{figure3}
\end{figure}
Thus $H_1'$ is a handlebody of genus $n_1-1$, $H_2'$ is a handlebody of genus $n_2$, $A$ is an annulus, and $H=H_1'\cup_A H_2'$ is an amalgamation of $H_1'$ and $H_2'$ along $A$.

By Theorem \ref{thm1}, the core curve $\sigma$ is a longitude for either $H_1'$ or $H_2'$. If $\sigma$ is a longitude for $H_1'$, there exists an essential disk $\Sigma$ in $H_1'$ such that $|\sigma\cap\partial \Sigma|=1$. It is clear that we may assume that $\sigma$ is disjoint from $\Delta$, and $\Sigma$ is disjoint from $\Delta$. We now choose a simple closed curve $\delta$ (see Figure 3 below) such that $|\delta\cap\partial \Delta|=1$, and $|\delta\cap\sigma|=1$. $\{[\delta],[\sigma]\}\subset\pi_1(T)$ is a generator set of $\pi_1(T)$. Thus, $\{\delta, \sigma\}$ is primitive in $H_1$.

\begin{figure}[!htb]
\centering
  \includegraphics[width=0.4\textwidth]{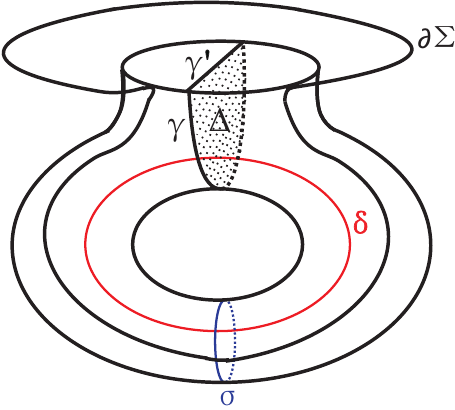}
  \caption{The curves $\sigma$, $\delta$, $\partial\Sigma$, and the arcs $\gamma$, $\gamma'$.} \label{figure3}
\end{figure}

Similarly, if $\sigma$ is a longitude for $H_2'$ (therefore $H_2$), then $\{\delta\}$ is primitive in $H_1$, and $\{\sigma\}$ is primitive in $H_2$.

The proof of the other direction follows directly from Lemma \ref{lem5}.


This completes the proof of Theorem ~\ref{thm2}. \hfill$\Box$

\begin{rem}
We only consider the surface sum $H=H_1\cup_F H_2$ of two handlebodies along a surface $F$, where $F$ is an annulus or a once-punctured torus. In general case, it maybe reduced to the case that the gluing surface in the surface sum of two handlebodies is not connected, in which the computation of the fundamental group of the corresponding 3-manifold is difficult.
\end{rem}




\bibliographystyle{amsplain}

\end{document}